\newcommand{\N}{\mathbb{N}}
\newcommand{\R}{\mathbb{R}}
\newcommand{\Id}{\operatorname{Id}}
\newcommand{\Morph}{\operatorname{Morph}}
\begin{document}
\theoremstyle{plain}
\newtheoremstyle{myexstyle}
{}{}{}{}{\bfseries}{.}{ }{}
\newtheorem{theorem}{Theorem}[section]
\newtheorem{corollary}[theorem]{Corollary}
\newtheorem{definition}[theorem]{Definition}
\newtheorem{lemma}[theorem]{Lemma}
\newtheorem{notation}[theorem]{Notation}
\newtheorem{convention}[theorem]{Convention}
\newtheorem{proposition}[theorem]{Proposition}
\theoremstyle{myexstyle}
\newtheorem{remark}[theorem]{Remark}
\newtheorem{example}[theorem]{Example}
\newtheorem{examples}[theorem]{Examples}
\numberwithin{equation}{section}

\title{Structure of finite dimensional linear bundles morphisms}

\author{Fernand PELLETIER $^1$ and Patrick CABAU $^2$}
\date{}

\maketitle

\tableofcontents

\begin{abstract}
Let $p_E : E \to M$ be a fibre bundle over the $m$-dimensional manifold $M$ whose typical fibre is the vector space $\R^e$ and let $p_F : F \to N$ be a fibre bundle over the $n$-dimensional manifold $N$ whose typical fibre is the vector space $\R^f$.\\
We are interested in the structure of the set 
$\Morph(E,F)$ of smooth linear bundle morphisms $\Phi : E \to F$ above smooth maps $\varphi : M \to N$.
\end{abstract}
\noindent
\textbf{MSC 2010}.- 
46T05,~	
46T10.~ 

\medskip
\noindent
\textbf{Keywords}.- Convenient manifold, bundle morphisms, pull-back bundles.\\

\medskip
\noindent
\textbf{Acknowledgments}.- The authors would like to thank Pr Eduardo Mart\'inez (University of Zaragoza, Spain) for having submitted this problem.\\

\bigskip
{\footnotesize
\noindent  
$^1$ Univ. Savoie Mont Blanc, CNRS, LAMA, 73000 Chamb\'{e}ry, France\\
fernand.pelletier@univ-smb.fr
\smallskip \\
$^2$ Univ. Savoie Mont Blanc, CNRS, LAMA, 73000 Chamb\'{e}ry, France\\
patrickcabau@yahoo.fr}
\medskip

\section{Introduction} This note is an answer to a question asked by Pr Eduardo Mart\'inez (University of Zaragoza, Spain) about the type of structure the set of  vector bundle morphisms between finite dimensional vector bundles over finite dimensional manifolds could be endowed. In this purpose,  according to the book on the  convenient setting  (cf. \cite{KrMi97}), given finite dimensional vector bundles $p_E:E\to M$ and $p_F:F\to N$, we show  that the set $ \Morph(E,F)$ of vector bundle morphisms  $\Phi: E\to F$ over smooth maps $\varphi:M\to N$ can be provided with a structure of  convenient vector bundle $\pi:\Morph(E,F)\to C^\infty(M,N)$  over the convenient manifold of smooth maps $C^\infty(M,N)$.

\section{Preliminaries and notations}
In all this note, $M$ (resp. $N$) is a second countable manifold of dimension $m$ (resp. $n$).\\
\subsection{Trivializations and transition functions of a linear bundle}
\label{_TrivializationsAndTransitionFunctionsOfALinearBundle}

 Since $F$ is a vector bunlde, there exists a covering $\{V_\alpha \}_{\alpha\in A}$\footnote{Where $A$ is a countable set.} of $N$  by open connected sets $V_\alpha$ in $N$ such that,  for each $\alpha\in A$, $F_{\vert V_\alpha}=p_F^{-1}(V_\alpha)$ is trivializable. 
Therefore  there exists a diffeomorphism (trivialization) 
$\tau_\alpha: p_F^{-1}(V_\alpha) \to V_\alpha \times \R^f$ satisfying the following conditions:
\begin{enumerate}
\item[\textbf{(TFB1)}]
$\mathrm{pr}_{1}\circ\tau_\alpha=p_F$ where $\operatorname{pr}_{1}:V_\alpha\times \R^f \to V_\alpha$ is the first projection
\[
\xymatrix{
	F_{|V_\alpha}=p_F^{-1} (V_\alpha)	\ar[rr]^{\tau_{\alpha}}\ar[d]^{p_F} 
		&&	V_\alpha \times \R^f\ar[lld]^{\operatorname{pr}_1}\\
				V_\alpha	&&
}
\]
 and the restriction  $\tau_{\alpha,y}$ of $\tau_\alpha$ to the fibre $F_y:= p_F^{-1}(y)$ is a linear space isomorphism onto $\{y\}\times \R^f$ for all $y\in V_\alpha$.
\item[\textbf{(TFB2)}]
If $\left(  V_{\alpha},\tau_{\alpha}\right)  $ and $\left( V_{\beta},\tau_{\beta} \right) $ are two trivializations such that\\ $V_{\alpha,\beta}:=V_{\alpha}\cap V_{\beta}\neq\emptyset$, the map
\[
\tau_{\alpha,\beta,y}=\tau_{\beta,y} 
\circ\left(  \tau_{\alpha,y}\right) ^{-1}:
\R^f \to \R^f .
\]
is an isomorphism for all $y\in V_{\alpha,\beta}$.
\item[\textbf{(TFB3)}]
For any trivializations  $ \left( V_{\alpha},\tau_{\alpha} \right)  $ and $ \left(
V_{\beta},\tau_{\beta} \right)  $ as previously defined, the transition function $\theta_{\alpha,\beta}$ is characterized by
\[
 \tau_{\beta,y} \circ \left( \tau_{\alpha,y}\right) ^{-1} (y,u)
= \left( y,\theta_{\alpha,\beta}(y,u) \right)  .
\]
and the map $y \mapsto \theta_{\alpha,\beta}(y,.)$ is a smooth map from $V_{\alpha,\beta}$ to $GL(\R^f)$.
\end{enumerate}
Moreover, the transition functions fulfil the cocycle conditions:
\begin{enumerate}
\item[\textbf{(CC1)}]
$\forall y \in V_\alpha \cap V_\beta \cap V_\gamma, \, 
\theta_{\beta,\gamma}(y) \circ \theta_{\alpha,\beta}(y) = \theta_{\alpha,\gamma}(y)$;
\item[\textbf{(CC2)}]
$\forall y \in V_\alpha,\, \theta_{\alpha,\alpha}(y) = \Id_{\R^f}$.
\end{enumerate}
As previously, there exists a countable covering $\{U_\lambda\}_{\lambda \in L}$ of $M$ by open sets $U_\lambda$ of $M$ and associated trivializations on the bundle $E$.

\subsection{Vector bundle homomorphisms}
\label{_VectorBundleHomomorphisms}

A \emph{vector bundle homomorphism} $\Phi : E \to  F$ which covers $\varphi : M \to N$ is a smooth map such that
\begin{enumerate}
\item[\textbf{(LBM1)}]
the following diagram is commutative
\begin{equation}
\label{eq_DiagPhi}
\xymatrix{
           E \ar[rr]^{\Phi}\ar[d]_{p_E} & &F \ar[d]^{p_F}\\
           M \ar[rr]^{\varphi}	&& N\\
}
\end{equation}
\item[\textbf{(LBM2)}]
$\forall x \in M,\,
\Phi_x:=\Phi_{\vert E_x} :  E_x \to F_{\varphi(x)}$ is linear.
\end{enumerate} 

As previously, there exists a covering $\{U_\lambda, \lambda\in L\}$ of $M$ by open set $U_\lambda$ of $M$ and trivialization
$\sigma_\lambda: p_E^{-1}(U_\lambda) \to U_\lambda \times \R^e$ which satisfies the assumption  \textbf{(TFB1)}.\\
Assume that $\varphi(U_\lambda)\cap V_\alpha\not=\emptyset$. If we restrict the diagram (\ref{eq_DiagPhi}) to $U_\lambda$ and $V_\alpha$, the maps
$\Phi_\lambda^\alpha:=\tau_\alpha\circ \Phi\circ \sigma_\lambda^{-1}:U_\lambda\times\R^e\to V_\alpha\times \R^f$ 
can be written
\begin{equation}
\label{eq_LocPhi}
   \Phi_\lambda^\alpha = \varphi_{\vert U_\lambda} \times A_\lambda^\alpha 
\end{equation}
where $A_\lambda^\alpha$ is a smooth map from $U_\lambda^\alpha:=U_\lambda\cap \varphi^{-1}(V_\alpha)$ 
 to $GL(\R^e,\R^f)$.

\subsection{Pullback vector bundles}
\label{__PullBackVectorBundles}

For any smooth function $\varphi : M \to N$, one can define the \emph{pullback vector bundle} $ p_F^!: \varphi^! F \to M$  of $p_F:F\to N$ by 
\[
\varphi^! F 
= \{ (x,v) \in M \times F:\varphi(x)= p_F(v) \},
\]
whose typical fibre is also $\R^f$; we then have a vector bundle homomorphism $ \varphi^!$ which covers $\varphi$ defined by 
\begin{equation}
\label{eq_varphi!}
 \varphi^!(x,v)=(\varphi(x), v)
\end{equation}
where $v \in F_{\varphi(x)}$ and such that the following diagram is commutative\footnote{cf. \cite{AbTo11}.}
\textbf{(LBM!)} 
\hspace*{3cm}
\xymatrix{
           \varphi^! F  \ar[rr]^{\varphi^! }\ar[d]_{p^!_F} & &F \ar[d]^{p_F}\\
           M \ar[rr]^{\varphi}	&& N\\
}
\\
and one has 
$\forall x \in M,\,
 \varphi^!_x :  \left( \varphi^! F \right) _x \to F_{\varphi(x)}$ is an isomorphism
(in particular  \textbf{(LBM2)} is satisfied).\\

According to context of $\S$\ref{_TrivializationsAndTransitionFunctionsOfALinearBundle} and  $\S$\ref{_VectorBundleHomomorphisms}, we have  local trivializations  $\sigma_\lambda$ of  $  p_E^{-1}(U_\lambda)$ with $\lambda \in L$, and  $\tau_\alpha $ of  $p_F^{-1}(V_\alpha)$, with  $\alpha\in A$ respectively.  We denote by $U_\lambda^\alpha=U_\lambda\cap\varphi^{-1}(V_\alpha)$. 
Note that $\{ U_\lambda^\alpha \}_ {(\lambda,\alpha)\in L\times A} $ is a covering of $M$ such that $E_{\vert U_\lambda^\alpha}$ is trivializable with trivialization $\sigma_\lambda$ in restriction 
to $U_\lambda^\alpha$ and $\varphi^!F_{\vert U_\lambda^\alpha}$ is also trivializable with trivialization $(\tau^!)_\lambda^\alpha(x,v)=(x, \tau_{\alpha,  \varphi(x)}(v))$ in restriction to  
$U_\lambda^\alpha$.  
Then according to  notations given in $\S$\ref{_VectorBundleHomomorphisms},  
the expression of $\varphi^!$ in the local charts whose domains are  
$ \left( U_\lambda^\alpha \times \mathbb{R}^f \right) $ and $ \left( V_\alpha \times \mathbb{R}^f \right) $ is
\[
(\varphi^!)_{\lambda}^\alpha
=
\varphi_{\vert U_\lambda^\alpha}
\times
\Id_{\R^f}.
\]
\begin{center}
\begin{tikzpicture}[x=0.8cm,y=0.8cm]
\tikzstyle
simple20=[postaction={decorate,decoration={markings, mark=at position .2 with {\arrow[scale=1.5,>=latex]{>}}}}]
\tikzstyle
simple30=[postaction={decorate,decoration={markings, mark=at position .3 with {\arrow[scale=1.5,>=latex]{>}}}}]
\tikzstyle simple55=[postaction={decorate,decoration={markings, mark=at position .55 with {\arrow[scale=1.5,>=latex]{>}}}}]
\tikzstyle simple65=[postaction={decorate,decoration={markings, mark=at position .65 with {\arrow[scale=1.5,>=latex]{>}}}}]
\draw[blue,thick,opacity=0.8](0,0) to[bend left=10] ++(6,0.5);
\draw[blue,thick,opacity=0.8](0,0) node[below]{$M$};
\draw[fill=blue](3,0.55) circle(1.5pt);
\draw[blue,thick,opacity=0.8] (3,0.55) node[below]{$x$};
\draw[blue,thick](1,0.21) to[bend left=6] ++ (3.5,0.35);
\draw[blue](1.15,-0.07)node{\footnotesize{$U_\lambda $}};
\draw[blue,thick](1.5,0.385) to[bend left=6] ++ (3.5,0.23);
\draw[blue](5.15,0.88)node{\footnotesize{$U_\mu$}};
\draw[blue,ultra thick](1.6,0.3) to[bend left=6] ++ (2.2,0.24);
\draw[blue](2.1,-0.02)node{\footnotesize{$U_\lambda^\alpha $}};
\draw [thick, draw=blue] 
(0,2) to[bend left=10] ++(6,0.5) --++(0,3) to[bend left=-10] ++(-6,-0.5) -- cycle;
\draw [thick, draw=blue] 
(0,2) to[bend left=10] ++(6,0.5) --++(0,3) to[bend left=-10] ++(-6,-0.5) -- cycle;
\draw[blue](0,5) node[below right]{$E$};
\draw[blue,thick](3,2.55)--++(0,3);
\draw[blue](3.1,5.55)node[below left]{\footnotesize{$E_x$}};
\draw[fill=blue,opacity=0.8](3,4.05) circle(1.5pt);
\draw (2.35,4.4) node[below right]{$u$};
\draw[simple55,blue](3,4.05) to[bend right=-10] ++ (0,-3.5);
\draw[blue](3.55,2.1)node{$p_E$};
\draw[red,thick,opacity=0.8](10,0) to[bend left=10] ++(6,0.5);
\draw[red,thick,opacity=0.8](16,0.5) node[below]{$N$};
\draw[fill=red](13,0.55) circle(1.5pt);
\draw[red,thick,opacity=0.8] (13,0.55) node[below]{$\varphi(x)$};
\draw[red,thick](11,0.21) to[bend left=6] ++ (3.5,0.35);
\draw[red](11.1,0)node{\footnotesize{$V_\alpha$}};
\draw[red,thick](11.5,0.385) to[bend left=6] ++ (3.5,0.23);
\draw[red](15.15,0.77)node{\footnotesize{$V_\beta$}};
\draw[simple55,darkgray](3,0.55) to[bend left=-28] ++ (10,0);
\draw[darkgray](8.3,-0.75) node[above]{$\varphi$};
\draw [thick, draw=blue] 
(10,2) to[bend left=10] ++(6,0.5) --++(0,3) to[bend left=-10] ++(-6,-0.5) -- cycle;
\draw [thick, draw=red] 
(10,2) to[bend left=10] ++(6,0.5) --++(0,3) to[bend left=-10] ++(-6,-0.5) -- cycle;
\draw[red](16,5.5) node[below left]{$F$};
\draw[red,thick](13,2.55)--++(0,3);
\draw[red](14.25,5.5)node[below left]{\footnotesize{$F_{\varphi(x)}$}};
\draw[fill=red,opacity=0.8](13,4.05) circle(1.5pt);
\draw[red](13.25,4.15)node{$v$};
\draw[simple55,red](13,4.05) to[bend left=10] ++ (0,-3.5);
\draw[red](13.1,2.1)node[right]{$p_F$};
\draw[simple55,darkgray](3,4) to[bend left=21] ++ (10,0);
\draw[darkgray](8.3,5) node[above]{$\Phi$};
\draw[blue](7,1.5)--(7,4.5);
\draw[blue](6.75,4.6)node{\footnotesize{$\mathbb{R}^e$}};
\draw[fill=blue,opacity=0.8](7,3.8) circle(1.2pt);
\draw[blue](6.3,4.4) node[below right]{\footnotesize{$u_\lambda$}};
\draw[fill=blue,opacity=0.8](7,2.2) circle(1.2pt);
\draw[blue](6.3,2.3) node[below right]{\footnotesize{$u_\mu$}};
\draw[simple55,blue](3,4.05) -- (7,3.8);
\draw[blue](4.8,4.2)node{\footnotesize{$\sigma_{\lambda,x}$}};
\draw[simple20,blue] (3,4.05) to[bend left=60] (3,0.55);
\draw[blue] (3.45,3.6) to[bend right=20] (7,3.8);
\draw[blue] (4.1,3.2) node{\footnotesize{$\sigma_\lambda$}}; 
\draw[simple55,blue](7,3.8) to[bend left=20] (7,2.2);
\node[blue,rotate=-90] at (7.5,3.1) {\footnotesize{$\mathcal{T}_{\lambda,\mu}$}}; 
\draw[red](9,1.5)--(9,4.5);
\draw[red](9.4,4.6)node{\footnotesize{$\mathbb{R}^f$}};
\draw[fill=red,opacity=0.8](9,3.8) circle(1.2pt);
\draw[red](8.9,4.4) node[below right]{\footnotesize{$v_\alpha$}};
\draw[fill=red,opacity=0.8](9,2.2) circle(1.2pt);
\draw[red](8.9,2.3) node[below right]{\footnotesize{$v_\beta$}};
\draw[simple55,red](13,4.05) -- (9,3.8);
\draw[red](11,4.25) node{$\tau_{\alpha,\varphi(x)}$};
\draw[red](12,3.85) to[bend right=75] (13,0.55);
\draw[simple30,red](13,4.05) to[bend left=10] (9,3.8);
\draw[red](11.4,3.52)node{\footnotesize{$\tau_\alpha$}};
\draw[red,very thin](12.4,3.6) to[bend right=35] (13,0.55);
\draw[simple20,red,very thin](13,4.05) to[bend left=15] (9,2.2);
\draw[red](12.75,3.42)node{\footnotesize{$\tau_\beta$}};
\draw[simple65,red,thin](9,3.8) to[bend right=20] (9,2.2);
\node[red,rotate=100] at (8.5,3.1) {\footnotesize{$\theta_{\alpha,\beta}$}};
\draw[simple55,purple](7,3.8) -- (9,3.8);
\draw[purple](7.75,4.15) node{\footnotesize{$A_\lambda^\alpha(x)$}};
\draw[simple55,purple](7,2.2) -- (9,2.2);
\draw[purple](8,1.8) node{\footnotesize{$A_\mu^\beta(x)$}};
\draw [thick, draw=purple] 
(0,7) to[bend left=10] ++(6,0.5) --++(0,3) to[bend left=-10] ++(-6,-0.5) -- cycle;
\draw[blue,thick,opacity=0.8](0,7) to[bend left=10] ++(6,0.5);
\draw[blue,thick,opacity=0.8](0,10) to[bend left=10] ++(6,0.5);
\draw[red,thick](0,7)--++(0,3);
\draw[red,thick](6,7.5)--++(0,3);
\draw[purple](0,10.2) node[below right]{$\varphi^! F$};
\draw[red,thick](3,7.55)--++(0,3);
\draw[red](3.15,10.4)node[below left]{\footnotesize{$\left( \varphi^!F \right) _x$}};
\draw[fill=red,opacity=0.8](3,9) circle(1.5pt);
\draw(3.6,9.2) node{$(\textcolor{blue}{x},\textcolor{red}{v})$};
\draw[simple55,darkgray](3,9) to[bend left=10] (13,4);
\draw[darkgray](8.7,7.3)node{$\varphi^!$};
\draw[simple55,darkgray](3,4) to[bend right=15](3,9);
\draw[darkgray](2.95,6.45)node{$\Phi^\varphi$};
\draw[dashed,simple55,darkgray](8.3,5) -- (3.6,6.5);
\draw[darkgray](5.98,5.95) node{\rotatebox[origin=c]{-19}{\footnotesize{$\Morph^\varphi$}}};
\draw[simple30,purple](3,9) to[bend right=35](3,0.55);
\draw[purple](1.2,6.6)node{$p^!F$};
\draw[simple30,purple](3,9) to[bend right=5] (9,3.8);
\draw[purple](4.59,7.41) to[bend left=40] (3,0.55);
\draw[purple](4.05,7.1)node{\footnotesize{$\left( \tau^! \right) _\lambda^\alpha$}};
\end{tikzpicture}
\end{center}

\section{The convenient manifold $C^\infty(M,N)$}
\label{_TheConvenientManifoldCintfyMN}

Even if the linear space $C^\infty  \left( \mathtt{U},\R^n \right) $, where $\mathtt{U}$ is an open set of $\mathbb{R}^m$, can be endowed with a structure of Fr\'echet space (cf. \cite{Bou81}, EVT III, 7. Exemples, for $n=1$), it is not possible to get, in general, such a structure\footnote{If the manifold $M$ is compact, then $C^\infty(M,N)$ is metrizable (cf. \cite{KrMi97}, 42., Proposition 42.3).} on the space $C^\infty(M,N)$ of smooth maps from the finite dimensional manifold $M$ into the finite dimensional manifold $N$.\\

In fact, following \cite{KrMi97},  Theorem~42.1, the space $C^\infty(M,N)$ can be endowed with a structure of convenient manifold\footnote{Remark that $C^\infty(M,N)$ can be endowed with other topologies:
\begin{enumerate}
\item[$\bullet$]
the \emph{strong $C^\infty$ topology} introduced by Mather (cf. \cite{Ill03}) is important because various important sets, such as submersions, embeddings,... are open in $C^\infty(M,N)$  (cf. \cite{Com14}, Theorem 3.);
\item[$\bullet$]
the Whitney $C^\infty$ topology $W = \displaystyle\bigcup_{k=0}^\infty W_k$ where a basis of neighboroods of the Whitney $C^k$ topology is 
$\{ \varphi \in C^\infty(M,N): \left( j^k \varphi \right)(M) \subset U \}$ where $U$ is an open set of the $k$-jets bundle $J^k(M,N)$. For this topology, $C^\infty(M,N)$ is a Baire space. 
\end{enumerate}
}   
 modelled on the space $C^\infty_c (M \leftarrow \varphi^! TN)$ of compact support sections of the pullback bundles $\varphi^! TN\to M$ along $\varphi:M \to N$ over $M$, endowed with an adequate bornological topology.\\


More precisely,  since $M$ and $N$ are finite dimensional manifolds, the set of smooth sections $C^\infty (M \leftarrow \varphi^! TN)$ has a structure of convenient space  (cf. \cite{KrMi97}, 30.1)\footnote{According to \cite{KrMi97}, Lemma 30.3, if, moreover, $M$ is a first countable topological space, then  $C^\infty (M \leftarrow \varphi^! TN)$ has in fact structure of Fr\'echet space.}.  If $K$ is any compact of $M$, the set of smooth sections  $C^\infty_K (M \leftarrow \varphi^! TN)$ with support in $K$ is a closed subspace of $C^\infty (M \leftarrow \varphi^! TN)$ and so is a convenient subspace of $C^\infty (M \leftarrow \varphi^! TN)$. 
The space $C^\infty_c (M \leftarrow \varphi^! TN)$ of smooth sections with compact support can be endowed with a convenient structure, strict and regular inductive limit of the spaces $C^\infty_{K_n} (M \leftarrow \varphi^! TN)$ where $\left( K_n \right) _{n \in \N}$ is  an exhaustive countable base of compact sets in $M$ (cf. \cite{KrMi97}, Lemma 30.4). In particular, a subset 
 in $C^\infty_c (M \leftarrow \varphi^! TN)$  is $c^\infty$ open if and only if its trace on each such space is open.\\

Now, let us describe this topology on $C^\infty(M,N)$. Consider the exponential $\exp: \mathbf{U} \to N$ associated to a given Riemannian metric on $N$, where $\mathbf{U}$ is an open neighbourhood of the zero section in $TN$ such that $ \left( \pi_N,\exp \right) : \mathbf{U} \to N \times N$ is a smooth diffeomorphism onto an open neighbourhood $\mathbf{V}$ of the diagonal.\\

We then consider the pullback vector bundle
\[
\xymatrix{
           \varphi^! TN  \ar[rr]^{\varphi^!}\ar[d]_{ \pi_N^!} & &TN \ar[d]^{\pi_N}\\
           M \ar[rr]^{\varphi}	&& N\\
}
\]
For any $\psi \in C^\infty(M,N)$, $\psi \sim \varphi$ means that $\varphi$ and $\psi$ agree off some compact subset in $M$.\\
Consider the set
\[
\mathbf{V}_\varphi 
= \{ \psi \in C^\infty(M,N):
 \forall x \in M, 
(\varphi(x),\psi(x)) \in \mathbf{V}, \psi \sim \varphi \}
\]
and the mapping
\[
\mathbf{v}_\varphi : \mathbf{V}_\varphi \to 
C^\infty_c(M \leftarrow \varphi^! TN)
\]
defined, for any $\psi \in C^\infty(M,N)$ and any $x \in M$, by
\[
\mathbf{v}_\varphi(\psi)(x)
= \left( x,\exp^{-1}_{\varphi(x)}(\psi(x)) \right).
\]

\begin{remark}
\label{R_HomotopyVarphiPsi}
If $\psi$ belongs to $\mathbf{V}_\varphi$, let $\widehat{\psi}\in C^\infty_c(M \leftarrow \varphi^! TN)$ such that $(\mathbf{v}_\varphi^{-1}(\widehat{\psi}(x))=\psi(x)$.  For any $t\in [0,1]$, consider the time dependent section 
$\widehat{\psi}_t$ defined by 
$\widehat{\psi}_t(x)=t\widehat{\psi}(x)$,  then $\psi_t(x)=\mathbf{v}_\varphi^{-1} \left( \widehat{\psi}_t(x) \right) $ defines a homotopy from $\varphi$ to $\psi$. Therefore,  
 by  homotopy invariance vector bundles Theorem  (cf. for instance \cite{Hus93}, Theorem 4.7, p.30), 
$\varphi^! TN$ and $\psi^! TN$ are isomorphic and so are  $C^\infty_c(M \leftarrow \varphi^! TN)$ and $C^\infty_c(M \leftarrow \psi^! TN)$.\\
\end{remark}
We then endow $C^\infty(M,N)$ with a structure of convenient manifold \textit{via} the atlas 
$\{ \left( \mathbf{V}_\varphi,\mathbf{v}_\varphi \right) \} _{\varphi \in C^\infty(M,N)}$ 
where the chart change mappings are given for any 
$s \in \mathbf{v}_\psi \left( \mathbf{V}_\varphi \cap \mathbf{V}_\psi \right) $ by
\begin{eqnarray}
\label{eq_ChartChangingCinftyMN}
\left( \mathbf{v}_\varphi \circ \mathbf{v}_\psi^{-1} \right) (s)
=
\left( 
\operatorname{Id}_M, (\pi_N,\exp)^{-1}
\circ 
(\varphi,\exp \circ (\pi_N^\ast \psi) \circ s)
\right).
\end{eqnarray}

\section{Structure on $\Morph(E,F)$}

\subsection{Associated morphisms to a morphism in  $\Morph(E,F)$ }
\label{__AssociatedMorphisms}

In this subsection, the smooth map $\varphi : M \to N$ is fixed.

\begin{proposition}
\label{P_MorphismPhivarphi} 
Let $\Phi : E \to F$  be a vector bundle morphism which covers $\varphi$.
The map
\[
\begin{array}{cccc}
\Phi^\varphi:	&E		&\to	& \varphi^! F	\\
		&(x,u)	&\mapsto& 
		 \left( x,  (\varphi^!_x)^{-1}\circ \Phi_x(u) \right) 
\end{array}
\]
 is a well defined vector bundle morphism over $\operatorname{Id}_M$.\\
Conversely, if 
\[
\Psi: E	\to	\varphi^!F
\]
 is a vector bundle morphism which covers $\Id_M$, the  map
\begin{equation}
\label{eq_ConversePhivarphi}
\Phi = \varphi^!\circ \Psi
\end{equation}
is a vector  bundle morphism which covers $\varphi$. Moreover, we have $\Phi^\varphi=\Psi$.
\end{proposition}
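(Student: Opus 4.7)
\medskip
\noindent
\textbf{Proof plan.} The plan is to verify in sequence that $\Phi^\varphi$ is a well-defined smooth vector bundle morphism over $\Id_M$, that $\Phi=\varphi^!\circ\Psi$ defines a smooth vector bundle morphism over $\varphi$, and that these two constructions are mutually inverse via the fibrewise isomorphism $\varphi^!_x$. Throughout the argument I would work in the trivializations $\sigma_\lambda$ of $E$ and $(\tau^!)_\lambda^\alpha$ of $\varphi^!F$ introduced in \S\ref{__PullBackVectorBundles}, where every check reduces to a direct computation.

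For the forward construction, I would first observe that \textbf{(LBM1)} for $\Phi$ gives $p_F(\Phi_x(u))=\varphi(x)$, which is exactly the condition for $(x,\Phi_x(u))$ to lie in $\varphi^!F$; since $\varphi^!_x:(\varphi^!F)_x\to F_{\varphi(x)}$ is a linear isomorphism by construction, its inverse composes with $\Phi_x$ to give a linear map, so \textbf{(LBM2)} for $\Phi^\varphi$ holds, and $p^!_F\circ\Phi^\varphi=p_E$ is immediate from the formula, giving \textbf{(LBM1)}. Smoothness is the main computation: combining $(\tau^!)_\lambda^\alpha(x,v)=(x,\tau_{\alpha,\varphi(x)}(v))$ with the local form (\ref{eq_LocPhi}) of $\Phi$ should yield
\[
(\tau^!)_\lambda^\alpha\circ\Phi^\varphi\circ\sigma_\lambda^{-1}(x,u_\lambda)=(x,A_\lambda^\alpha(x)\,u_\lambda),
\]
which is smooth on each $U_\lambda^\alpha$ since $A_\lambda^\alpha:U_\lambda^\alpha\to GL(\R^e,\R^f)$ is smooth, and hence smooth globally.

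For the converse, given $\Psi:E\to\varphi^!F$ covering $\Id_M$, I would set $\Phi=\varphi^!\circ\Psi$ and verify \textbf{(LBM1)} by composing the defining diagrams of $\varphi^!$ and $\Psi$, obtaining
\[
p_F\circ\Phi=p_F\circ\varphi^!\circ\Psi=\varphi\circ p^!_F\circ\Psi=\varphi\circ\Id_M\circ p_E=\varphi\circ p_E.
\]
Fibrewise linearity is then automatic from $\Phi_x=\varphi^!_x\circ\Psi_x$, and smoothness is inherited from both factors. Finally, plugging this $\Phi$ back into the definition of $\Phi^\varphi$ gives $(x,(\varphi^!_x)^{-1}\circ\varphi^!_x\circ\Psi_x(u))=\Psi(x,u)$, establishing $\Phi^\varphi=\Psi$.

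I do not anticipate any genuinely hard step, since each verification is either a diagram chase or a routine computation in the charts recalled above. The only subtlety to watch is the identification $(\varphi^!F)_x\cong F_{\varphi(x)}$ mediated by $\varphi^!_x$, which is used implicitly whenever a section of $\varphi^!F$ is read as a map into $F$; the local-coordinate formula above makes this bookkeeping transparent.
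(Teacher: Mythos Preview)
Your proposal is correct and follows essentially the same approach as the paper: well-definedness via invertibility of $\varphi^!_x$, smoothness by the local computation yielding $(\Phi^\varphi)_\lambda^\alpha=(\Id_{U_\lambda^\alpha},A_\lambda^\alpha)$, and the converse as a composition of bundle morphisms with $\Phi^\varphi=\Psi$ immediate. Your write-up is in fact slightly more explicit than the paper's on the diagram chases for \textbf{(LBM1)} and on the final identity, but there is no substantive difference in strategy.
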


\begin{proof}  
First, note that from (\ref{eq_varphi!}), the linear map $(\varphi^!_x)$ is invertible and so $\Phi^\varphi(x,u)$ is well defined.  We must show that $\Phi^\varphi$ is a smooth map.  Since it is a local problem, we restrict ourselves to the context of $\S$\ref{_VectorBundleHomomorphisms} and $\S$\ref{__PullBackVectorBundles}; choose any $x\in M$  such that $x$ belongs to  $  U_\lambda $ and $
\varphi(x)$ belongs to some  $ V_\alpha$; we then have $U_\lambda^\alpha\not=\emptyset$. Then the bundles $E_{\vert U_\lambda^\alpha}$, $F_{\vert V_\alpha}$ and $\varphi^!F_{\vert U_\lambda^\alpha}$ are trivializable. Then, from the definition of $\Phi^\varphi$, and according to (\ref{eq_LocPhi}), we have:  
\begin{equation}
\label{eq_PhiVarphiLambdaAlpha}
(\Phi^\varphi)_\lambda^\alpha:=\tau_\alpha\circ \Phi^\varphi\circ \left((\tau^!)_\lambda^\alpha\right)^{-1}
=
\left( \Id_{U_\lambda^\alpha},\tau_{\alpha,\varphi(x)}\circ \Phi\circ (\sigma_{\lambda,x} ^{-1})_{\vert U_\lambda^\alpha\times \R^e} 
\right)
\end{equation}
with $\sigma_{\lambda,x}=\operatorname{pr}_2^e \circ \sigma_\lambda$ and 
$\tau_{\alpha,\varphi(x)}=\operatorname{pr}_2^f \circ \tau_\alpha$ where $\operatorname{pr}_2^e$ (resp. $\operatorname{pr}_2^f$) is the projection on the typical type fibre $\R^e$ (resp. $\R^f$).
So we get 
$ \left( \Phi^\varphi \right) _\lambda^\alpha
=
\left( \Id_{U_\lambda^\alpha},A_\lambda^\alpha \right) $ and then $(\Phi^\varphi)_\lambda^\alpha$ is a smooth map, for any such open sets $U_\lambda^\alpha$ and $V_\alpha$. This implies that  $\Phi^\varphi$ is a smooth map. Finally,  from its definition, $\Phi^\varphi$ satisfies the condition \textbf{(LBM!)}.\\

For the converse,  the relation (\ref{eq_ConversePhivarphi}) defines a vector bundle morphism  as a composition of such morphisms and it is clear that we have $\Phi^\varphi=\Psi$.
\end{proof}

From Proposition \ref{P_MorphismPhivarphi}, we have  a bijection
\begin{equation}
\label{eq_varphiindex}
\begin{array}{cccc}
\Morph^\varphi:	&\Morph_\varphi (E,F) 	& \to	&\Morph_{\Id_M}  \left( E,\varphi^! F \right)	\\		
			& \Phi	& \mapsto 	& \Phi ^\varphi
\end{array}
\end{equation}
where 
\begin{enumerate}
\item[$\bullet$]
$\Morph_{\Id_M}  \left( E,\varphi^! F \right) $ is the set of vector bundle morphisms: $E \to \varphi^! F$ above $\Id_M$ 
\item[$\bullet$]
$\Morph_\varphi (E,F)$ is the set of vector bundle morphisms: $E \to F$ above $\varphi$.
\end{enumerate}

It is easy to see  that 
$L(E,\varphi^! F):= \displaystyle\bigcup_{x \in M} L  \left( E_x,F_{\varphi(x)} \right)$  has a  structure of vector bundle  over $M$ whose typical fibre is $L  \left( \R^e,\R^f \right)$ and structural group $GL\left(\R^e\right)\times GL\left(\R^f\right)$. 
So $\Morph_{\Id_M} \left( E,\varphi^! F \right)$ is the space of smooth sections of  $L(E,\varphi^! F)\to M$ and, according to \cite{KrMi97}, 30, this set is   denoted  $C^\infty\left(M \leftarrow  L\left(E,\varphi^! F\right))\right)$  and has a structure of Fr\'echet space from Proposition 30.1.

\begin{remark}
\label{R_IsomorphismMorphIdM} ${}$
\begin{enumerate}
\item
Note that $\Morph_\varphi (E,F)$ and  $\Morph_{\Id_M}  \left( E,\varphi^! F \right)$ have a real vector space structure and it is easy to see that  $\Morph^\varphi$ is linear.  Since  $\Morph^\varphi$ is a bijection, then we can provide $\Morph_\varphi (E,F)$ with a structure of convenient  space and so $\Morph^\varphi$ is  an isomorphism.  
\item 
Assume that $\psi$ is homotopic to $\varphi$. Then by  Homotopy invariance vector bundles Theorem  (cf. \cite{Hus93}, Theorem 4.7, p.30), there exists  a bundle isomorphism $H_{\varphi,\psi}$ from 
$\varphi^! F$ to $\psi^! F$,  and so we get  an  isomorphism    from $\Morph_{\Id_M}  \left( E,\varphi^! F \right) $  to $\Morph_{\Id_M}  \left( E,\psi^! F \right) $ given by 
$\Morph^\psi = H_{\varphi,\psi} \circ \Morph^\varphi$. 
\end{enumerate}
\end{remark} 

\subsection{Structure of convenient manifold on $\Morph(E,F)$}
\label{_StructureOfConvenientManifoldOnMorphEF}

Consider
\[
\begin{array}{cccc}
\pi:	& \Morph(E,F)	&\to	&C^\infty(M,N)\\
		& (\varphi,\Phi)&\mapsto&\varphi
\end{array}
\]
where one has
\[
\Morph(E,F) 
= \displaystyle\bigcup_{\varphi \in C^\infty(M,N)}
\Morph_{\Id_M} \left(E,\varphi^! F \right) 
\] 

We will provide a convenient manifold structure  on $\Morph(E,F)$ such that $ \pi: \Morph(E,F)	\to	C^\infty(M,N)$ is a convenient vector bundle.
\begin{theorem} 
\label{T_ConvenientMorph(EF)}
There exists a convenient manifold structure on $\Morph(E,F)$  modelled on $C^\infty_c(M \leftarrow \varphi^!TN)\times C^\infty\left(M \leftarrow  L \left(E,\varphi^! F\right) \right)$.\\
Moreover, 
\[
\pi:  \Morph(E,F)\to C^\infty(M,N)
\]
is a convenient vector bundle  with typical fibre $\Morph_{\Id_M} \left(E,\varphi^! F \right)$.
\end{theorem}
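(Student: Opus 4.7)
The plan is to build an atlas on $\Morph(E,F)$ above the atlas $\{(\mathbf{V}_\varphi,\mathbf{v}_\varphi)\}_{\varphi\in C^\infty(M,N)}$ from Section \ref{_TheConvenientManifoldCintfyMN}, trivializing the fibres $\pi^{-1}(\psi)=\Morph_{\Id_M}(E,\psi^!F)$ by explicit bundle isomorphisms $H_{\varphi,\psi}\colon\varphi^!F\to\psi^!F$. After fixing a linear connection on $F$, for each $\psi\in\mathbf{V}_\varphi$ with $\widehat{\psi}=\mathbf{v}_\varphi(\psi)\in C^\infty_c(M\leftarrow\varphi^!TN)$, I would define $H_{\varphi,\psi,x}\colon F_{\varphi(x)}\to F_{\psi(x)}$ as parallel transport along the geodesic $t\mapsto\exp_{\varphi(x)}(t\widehat{\psi}(x))$, $t\in[0,1]$; assembled in $x$, these give a bundle isomorphism over $\Id_M$ that realises the homotopy-invariance isomorphism of Remark \ref{R_HomotopyVarphiPsi}. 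Pre-composition then yields a linear isomorphism
\[
h_{\varphi,\psi}\colon\Morph_{\Id_M}(E,\psi^!F)\to\Morph_{\Id_M}(E,\varphi^!F),\quad\Psi\mapsto H_{\varphi,\psi}^{-1}\circ\Psi.
\]

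On $\mathcal{V}_\varphi:=\pi^{-1}(\mathbf{V}_\varphi)$ I would then set
\[
\mathbf{w}_\varphi(\Phi)=\bigl(\mathbf{v}_\varphi(\pi(\Phi)),\ h_{\varphi,\pi(\Phi)}(\Morph^{\pi(\Phi)}(\Phi))\bigr)\in \mathbf{v}_\varphi(\mathbf{V}_\varphi)\times C^\infty(M\leftarrow L(E,\varphi^!F)).
\]
By Proposition \ref{P_MorphismPhivarphi} this is a bijection, and by Remark \ref{R_IsomorphismMorphIdM} it restricts to a linear isomorphism on each fibre of $\pi$. On an overlap $\mathcal{V}_\varphi\cap\mathcal{V}_{\varphi'}$ the transition map reads
\[
(\mathbf{w}_{\varphi'}\circ\mathbf{w}_\varphi^{-1})(s,S)=\bigl((\mathbf{v}_{\varphi'}\circ\mathbf{v}_\varphi^{-1})(s),\ (H_{\varphi',\psi}^{-1}\circ H_{\varphi,\psi})\circ S\bigr),\quad \psi=\mathbf{v}_\varphi^{-1}(s).
\]
The first component is smooth by (\ref{eq_ChartChangingCinftyMN}); the second is linear in $S$, with the dependence on $s$ carried by $\psi\mapsto H_{\varphi',\psi}^{-1}\circ H_{\varphi,\psi}$.

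Read in the local trivializations $(\sigma_\lambda,\tau_\alpha)$ of Section \ref{_TrivializationsAndTransitionFunctionsOfALinearBundle}, this $\psi$-dependent factor becomes a family of $GL(\R^f)$-valued maps on the overlaps $U_\lambda^\alpha$ depending on the parameter $\widehat{\psi}$ only through evaluation, the exponential map, and the solution of the linear parallel-transport ODE. Smoothness in the convenient sense then follows from the smooth dependence on parameters of ODE flows and the exponential law $C^\infty(X\times Y,Z)\cong C^\infty(X,C^\infty(Y,Z))$, combined with the inductive-limit description of $C^\infty_c$ recalled in Section \ref{_TheConvenientManifoldCintfyMN}. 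Since $\mathbf{w}_\varphi$ is fibrewise linear and the cocycle is linear in $S$ with smooth base dependence, $\pi\colon\Morph(E,F)\to C^\infty(M,N)$ becomes a convenient vector bundle modelled on $C^\infty_c(M\leftarrow\varphi^!TN)\times C^\infty(M\leftarrow L(E,\varphi^!F))$ with the announced typical fibre. The main obstacle is precisely this smoothness verification: the target of $\psi\mapsto H_{\varphi,\psi}$ varies with $\psi$, and one has to translate the pointwise-in-$M$ smoothness of parallel transport into smoothness as a map into a section space, which is essentially the only step that goes beyond finite-dimensional differential geometry.
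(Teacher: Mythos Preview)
Your approach is essentially the same as the paper's: both build trivializations $\pi^{-1}(\mathbf{V}_\varphi)\to\mathbf{V}_\varphi\times\Morph_{\Id_M}(E,\varphi^!F)$ by sending $(\psi,\Phi)$ to $(\psi,\ H_{\varphi,\psi}^{-1}\circ\Morph^{\psi}(\Phi))$, then check that the resulting transition maps are smooth and fibrewise linear. The paper's map $\mathbf{T}^{\varphi}$ is exactly your $\mathbf{w}_\varphi$ before composing with the base chart $\mathbf{v}_\varphi$.

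The genuine difference is in how $H_{\varphi,\psi}$ is produced. The paper simply invokes the homotopy invariance theorem of \cite{Hus93} (via Remark~\ref{R_IsomorphismMorphIdM}) to obtain \emph{some} isomorphism $\varphi^!F\cong\psi^!F$ for each $\psi\in\mathbf{V}_\varphi$, and then writes down the transition formula without discussing how $H_{\varphi,\psi}$ varies with $\psi$. You instead fix a linear connection on $F$ and take $H_{\varphi,\psi}$ to be parallel transport along the geodesic homotopy, which makes the $\psi$-dependence explicit and allows you to argue smoothness of the transitions via smooth parameter-dependence of ODE flows and the exponential law. This is a real improvement: the abstract homotopy-invariance isomorphism is defined only up to automorphism of $\varphi^!F$ and carries no a~priori smooth dependence on $\psi$, so the paper's argument is silent precisely at the point you identify as ``the main obstacle''. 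Your concrete choice buys you a verifiable cocycle, at the cost of having to check (as you note) that pointwise smoothness of parallel transport upgrades to smoothness into the section space.
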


\begin{remark}
\label{R_CasParticulier}
According to \cite{KrMi97}, 42.4, Theorem~\ref{T_ConvenientMorph(EF)} is also true if $N$ has a structure of convenient manifold which can be provided with a local addition. On the other hand, if $M$ is compact, then $C^\infty (M,N)$ can be endowed with a structure of Fr\'echet manifold and $\Morph_\varphi (E,F)$ is a Fr\'echet space; so  $\pi:  \Morph(E,F)\to C^\infty(M,N)$ is a Fr\'echet vector bundle. 
\end{remark}

\begin{proof}
We consider a chart $ \left( \mathbf{V}_\varphi,\mathbf{v}_\varphi \right) $ around $\varphi$ in $C^\infty(M,N)$  and according to Remark \ref{R_IsomorphismMorphIdM},  consider the  map
\[
 \begin{array}{cccc}

\mathbf{T}^{\varphi}:	

	&	\pi^{-1}(\mathbf{V}_\varphi)

	&	\to 	

	&	\mathbf{V}_\varphi\times \Morph_{\Id_M}  

	\left( E,\varphi^! F \right)	\\

	& (\psi,\Psi)

	& \mapsto 

	&  \big( \psi,  \left( H_{\varphi,\psi} \right) ^{-1} \circ \Morph^{\psi}(\Psi)  \big)  

\end{array}
\]
We then get the following commutative diagram: 
\[
\xymatrix{
	\Morph(E,F)_{\mathbf{V}_\varphi}
	=\pi^{-1} (\mathbf{V}_\varphi)	\ar[rr]^{\mathbf{T}^{\varphi}}\ar[d]^{\pi} 
		&&	\mathbf{V}_\varphi\times \Morph_{\Id_M}  \left( E,\varphi^! F \right)  \ar[lld]^{\operatorname{pr}_1}\\
		\mathbf{V}_\varphi	&&
}
\] 
From the definition of $ \mathbf{T}^{\varphi}$,  its  restriction  to  $\pi^{-1}(\psi)=\Morph_\psi (E,F)$ is 
 $ \left( H_{\varphi,\psi} \right) ^{-1} \circ \Morph^\psi(\Psi)$.  According to Remark \ref{R_IsomorphismMorphIdM},   $H_{\varphi,\psi}$ is a diffeomorphism from  $\varphi^! F$ to $\psi^! F$   and so  $ H_{\varphi,\psi}^{-1} \circ \Morph^\psi(\Psi)  $ 
is a diffeomorphism  from 
$\Morph_\psi (E,F)$  to  $\Morph_{\Id_M}  \left( E,\varphi^! F \right) $.\\
It follows that the map $ \mathbf{T}^{\varphi}$ is bijective and so we can provide $\Morph(E,F)_{\mathbf{V}_\varphi}$ with the convenient manifold structure modelled on $C^\infty_c(M \leftarrow \varphi^!TN)\times C^\infty\left(M\leftarrow L(\R^e, \R^f)\right)$ for which $\mathbf{T}^{\varphi}$ is a diffeomorphism. Note that, since the restriction  $ \mathbf{T}^{\varphi}$ to  $\pi^{-1}(\psi)$ is 
$\left( H_{\varphi,\psi} \right) ^{-1} \circ \Morph^\psi(\Psi)$, the convenient structure of manifold on each  $\pi^{-1}(\psi)$  defined in Remark  \ref{R_IsomorphismMorphIdM}, Assertion 1 is a  supplemented convenient submanifold of $\Morph(E,F)_{\mathbf{V}_\varphi}$. This implies that  $ \mathbf{T}^{\varphi}$ is a trivialization  of  $\pi^{-1}(\mathbf{V}_\varphi)$ and the restriction of $\pi$ to this convenient manifold is a surjective submersion.  \\
Consider a chart $\mathbf{V}_\psi$ in  $C^\infty(M,N)$ such that $\mathbf{V}_\psi\cap\mathbf{V}_\varphi\not=\emptyset$.\\

The transition functions  
{\footnotesize
\[
 \mathbf{T}^{\varphi}\circ ( \mathbf{T}^{\psi})^{-1}: \; \mathbf{T}^\psi \Big( \left( \mathbf{V}_\varphi \cap \mathbf{V}_\psi \right) 
\times \Morph_{\Id_M} \left( E,\psi^! F \right)
\Big)
\to
\mathbf{T}^\varphi \Big( \left( \mathbf{V}_\varphi \cap \mathbf{V}_\psi \right) 
\times \Morph_{\Id_M} \left( E,\varphi^! F \right)
\Big)  
\]
}
  are the maps given by
\begin{equation}
\label{eq_TransitionTV}
 \mathbf{T}^{\varphi}\circ ( \mathbf{T}^{\psi})^{-1}
 =
 \left( \mathbf{v}_\varphi \circ \mathbf{v}_\psi^{-1}, 
 (\Morph^\varphi)(\Phi)^{-1}\circ(\Morph^\psi(\Psi))  \right).
\end{equation}
where $\mathbf{v}_\varphi \circ \mathbf{v}_\psi^{-1}$ corresponds to the change of charts  (\ref{eq_ChartChangingCinftyMN}).\\ 
As we have already seen, 
\[
\Pi_\varphi:\Morph_{\Id_M}  \left( E,\varphi^! F \right) \to M:= C^\infty \left( M \leftarrow L(\R^e, \R^f) \right)
\] 
has a convenient  vector space structure. 
Then if 
 $ \left\{ \left( \mathbf{V}_\varphi,\mathbf{v} \right) \right\}
 _{\varphi\in C^\infty (M,N)}$ is an atlas on $ C^\infty (M,N)$, it follows that  $\left\{ \left(\pi^{-1}((\mathbf{V}_\varphi)),(\mathbf{v}_\varphi, \mathbf{T}_\phi) \right) \right\}$ is an atlas  which defines  a convenient structure manifold on $\Morph(E,F)$  modelled on  $C^\infty_c(M \leftarrow \varphi^!TN)\times  \Morph_{\Id_M}  \left( E,\varphi^! F \right)$  and $\pi:  \Morph(E,F)\to C^\infty(M,N)$ is also a convenient  vector bundle with typical fibre $\Morph_{\Id_M} \left(E,\varphi^! F \right)$. 
\end{proof}

\begin{remark}
\label{Morph(E,F)NotSubmanifold}
Since $E$ and $F$ are finite dimensional fibre bundles over $M$ and $N$ respectively, $E$ and $F$ are in particular second countable finite dimensional manifolds, the  set $C^\infty (E,F)$ can be provided with a convenient  structure of manifold modelled on $C^\infty_c(E \leftarrow \varphi^\ast TF)$, as proved in $\S$\ref{_TheConvenientManifoldCintfyMN} for the set $C^\infty (M,N)$ of smooth maps between second countable finite-dimensional manifolds.\\

As $\Morph(E,F)$ is a subset of  $C^\infty (E,F)$ which is provided with its own structure of convenient manifold, it is natural to ask us if, in this context, \\

\emph{Is $\Morph(E,F)$ a weak convenient submanifold of $C^\infty(E,F)$?} \\

Recall  that if $X_1$ and $X_2$ are convenient manifolds respectively modelled on the convenient spaces $\mathbb{X}_1$ and $\mathbb{X}_2$, 
a smooth map $f:X_1 \to X_2$ is called
a weak immersion if, for any $x\in X_1$, there exist a chart $(U_1,\phi_1)$ around $x\in X_1$ and a chart $(U_2,\phi_2)$ around $f(x)\in X_2$ such that
\begin{equation}
\label{Eq_weakimmersion}
\phi_2\circ f\circ \phi_1^{-1}=\iota_{\phi_1(U_1)}
\end{equation}
where $\iota :\mathbb{X}_1\to \mathbb{X}_2$ is a smooth 
  linear injective map.  \\

Then $\iota$ is continuous for the convenient topology, i.e., for any $c^\infty$ open set $\mathbb{U}_2\subset \mathbb{X}_2$, $ \iota^{-1}(\mathbb{U}_2)$ is a $c^\infty$-open set in $\mathbb{X}_1$.  This implies that the inclusion of $X_1$ into $X_2$ is a smooth injective and continuous map between $X_1$ and $X_2$. Now, if, moreover, $Z$ is a weak submanifold of $X_1$, since the composition of smooth convenient maps is a smooth map and the composition of continuous maps is continuous, the  inclusion of $Z$ into $X_2$ is also an injective convenient smooth continuous map from $Z$ to $X_2$. We will apply the argument for the inclusion of the fibre $\pi^{-1}(\varphi)$ to $C^\infty(E,F)$.\\

\emph{For an answer to the above question, argue by contradiction: assume that the answer is yes.} \\

Let $\Phi$ be a bundle morphism from $E$ to $F$, there exists a chart $(\mathbf{W}_\Phi,\mathbf{w}_\Phi)$  around $\Phi$, and, for any $\Psi\in \mathbf{W}_\Phi$, there exists a compact subset $K_\Psi$ in $E$ such that $\Phi=\Psi$ on $E\setminus K_\Psi$.   In particular, if $\Phi$ (resp. $\Psi$) covers $\varphi$ (resp. $\psi$) then $\varphi=\psi$ on $p_E(E\setminus K_\Psi)\subset M$.  Now we have seen in the proof of Theorem \ref{_StructureOfConvenientManifoldOnMorphEF} that $\pi^{-1}(\varphi)$ is a submanifold of $\Morph(E,F)$. So, accoding to the previous general argument, under our assumption, $\Morph_\varphi(E,F)\cap \mathbf{W}_\Phi$, for  some chart domain $\mathbf{W}_\Phi$ in $C^\infty(E,F)$, is open in $\Morph_\varphi(E,F)$. Therefore, there exist vector  bundle morphisms  $\Phi$ and $\Psi $ which cover $\varphi$, such that $\Phi \not= \Psi$ and which belongs to $\Morph_\varphi(E,F)\cap \mathbf{W}_\Phi$. 
But, there exists a compact set $K_\Psi$ in $E$ such that  $\Phi=\Psi$ on $E\setminus K_\Psi$. But  since  $\Psi\not=\Phi$, there must exist $(x,u)\in K_\Psi$ such that $\Phi(x,u)\not=\Psi(x,u)$. However, there exists $n\in \mathbb{N}\setminus\{0\}$ such that $(x,nu)\not\in K_\Psi$ and so $\Psi(x,nu)=\Phi(x,nu)$.  
Since $\Psi$ is linear in the second variable, one has $n \Psi(x,u)=n \Phi(x,u)$, so we get $\Psi(x,u)= \Phi(x,u)$ because $n \neq 0$.  
Now, this result is also true for any $x\in \pi(K_\Psi)$ and this implies that $\Psi=\Phi$, which means that  $\Morph_\varphi(E,F)\cap \mathbf{W}_\Phi=\{\Phi\}$ which is not an open set of  $\Morph_\varphi(E,F)$.  By the above general argument, we get a contradiction. Therefore, $\Morph(E,F)$ is {\bf not} a weak convenient submanifold of $C^\infty(E,F)$.
\end{remark}

\end{document}